\documentclass[11pt,letterpaper,reqno]{amsart}
\usepackage{amsmath,amssymb,amsthm}
\usepackage{hyperref}

\newtheorem{theorem}{Theorem}[section]

\newtheorem{lemma}[theorem]{Lemma}
\theoremstyle{remark}

\numberwithin{equation}{section}
\newcommand{\R}{\mathbb R}
\newcommand{\loc}{\mathrm{loc}}

\title[Half-space theorem for the Allen--Cahn equation]
{A half-space theorem for the Allen--Cahn equation in all dimensions}

\begin{document}

\author{Sanghoon Lee}
\address[Sanghoon Lee]{Research Institute of Mathematics, Seoul National University, 1 Gwanak-ro, Gwanak-gu, Seoul 08826, Republic of Korea}
\email{lsh514@snu.ac.kr}

\begin{abstract}
We prove that, in every dimension, a bounded, non-constant entire solution
of the Allen--Cahn equation whose zero set is contained in a half-space
is a one-dimensional heteroclinic solution.
\end{abstract}

\date{\today}
\subjclass[2020]{Primary 35B08; Secondary 53C24, 49Q15, 35J61}
\keywords{Allen--Cahn equation; half-space theorem; moving plane method}

\maketitle

\section{Introduction}\label{sec:intro}

The classical half-space theorem of Hoffman and Meeks \cite{HM}
states that a connected, properly immersed minimal surface without
boundary in $\R^3$ that is contained in a half-space must be a plane. The situation changes in higher dimensions. For every $n\ge4$, the
rotational catenoid in $\R^n$ is a complete, properly embedded,
non-flat minimal hypersurface contained in a slab between two parallel
hyperplanes. Consequently, confinement to a half-space, or even to a slab,
does not by itself imply flatness for minimal hypersurfaces in higher
dimensions. This distinction provides a natural setting in which to
examine the similarities and differences between minimal hypersurfaces
and interfaces arising in phase transition models.

In this paper, we study bounded entire solutions of the Allen--Cahn
equation
\begin{equation}\label{eq:AC}
-\Delta u=u-u^3\qquad\text{in} \quad \R^n.
\end{equation}
The basic non-constant solution is the one-dimensional heteroclinic solution
\[
H(t)=\tanh\!\left(\frac{t}{\sqrt2}\right),
\]
which connects the two constant states $-1$ and $1$. The half-space problem for \eqref{eq:AC} was studied by Hamel, Liu,
Sicbaldi, Wang, and Wei \cite{HLSWW}. In higher dimensions, Agudelo, del Pino, and Wei \cite{ADW} constructed catenoidal solutions whose nodal sets approximate higher-dimensional catenoids on compact sets. These nodal sets have logarithmically growing ends and are therefore not contained in any slab. Moreover, the higher-dimensional counterexamples to De Giorgi's conjecture constructed in \cite{DPKW} have nodal sets that are not contained in any half-space. These observations motivated the question of whether the half-space theorem extends to higher dimensions. In this paper, we answer this question affirmatively:

\begin{theorem}\label{thm:all}
Let $n\ge2$, and let $u\in C^2(\R^n)$ be a bounded, non-constant
solution of \eqref{eq:AC}. If
\[
\{u=0\}\subset\{x_n>0\},
\]
then there exists $a>0$ such that
\[
u(x)=\pm H(x_n-a).
\]
\end{theorem}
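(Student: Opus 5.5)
Since $\{u=0\}\subset\{x_n>0\}$, the lower half-space $\{x_n\le 0\}$ is free of zeros, so $u$ has a fixed sign there. Replacing $u$ by $-u$ if needed, we assume $u<0$ on $\{x_n\le0\}$. Because $u$ is non-constant, bounded and solves \eqref{eq:AC}, standard arguments give $|u|<1$: the maximum principle applied to $u^2-1$, together with the strong maximum principle, rules out $|u|\ge1$ anywhere. The plan has four steps:
\begin{enumerate}
\item show $u\to-1$ as $x_n\to-\infty$;
\item show $u$ is positive somewhere, so that the zero set is nonempty;
\item run a moving plane / sliding argument in the $x_n$ direction to get $\partial_n u>0$;
\item upgrade monotonicity to one-dimensionality.
\end{enumerate}

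\textbf{Step 1: the limit as $x_n\to-\infty$.} Take any sequence $x^k$ with $x^k_n\to-\infty$. The translates $u(\cdot+x^k)$ converge in $C^2_{\loc}$ to an entire solution $v$ with $-1\le v\le 0$. A negative bounded solution of \eqref{eq:AC} must be $\equiv-1$ or $\equiv0$. To see this, use a Liouville argument: compare with radial solutions of the Dirichlet problem on large balls, whose minimal positive solution tends to $1$. Applied to $-v$, this shows that if $v\not\equiv0$ then $v\equiv-1$.

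To exclude $v\equiv0$, we use the same ball comparison directly on $u$. On a large ball $B_R(y)\subset\{x_n<0\}$, the function $-u$ is a positive supersolution of the problem. It therefore lies above the first-eigenfunction-type subsolution $\varepsilon\phi_R$, and sliding this subsolution (the Berestycki--Caffarelli--Nirenberg sliding method) yields $-u\ge w_R$, where $w_R\to1$ uniformly on $B_{R/2}$. Hence $u\to-1$ uniformly as $x_n\to-\infty$.

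\textbf{Step 2: $u$ is positive somewhere.} If $u\le0$ everywhere, the same argument applied on all of $\R^n$ gives $u\equiv-1$, which contradicts non-constancy. So $u$ takes positive values somewhere.

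\textbf{Step 3: monotonicity in $x_n$.} For $\lambda\in\R$, set
\[
u_\lambda(x)=u(x',2\lambda-x_n),
\]
the reflection of $u$ across the plane $\{x_n=\lambda\}$, and compare $u_\lambda$ with $u$ on $\{x_n<\lambda\}$. Near $\lambda=-\infty$ both functions are close to $-1$ on the relevant region, where the linearized operator $-\Delta-(1-3u^2)$ is coercive because $1-3u^2<0$. This starts the moving plane procedure; it is essentially the approach of \cite{HLSWW}, but we need it without any dimension restriction.

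The main obstacle is the unbounded domain. We have no decay as $|x'|\to\infty$ inside $\{x_n>0\}$, and the zero set could be non-compact. I would handle this with the sliding method in the form used for the De Giorgi/Gibbons problem: compare $u$ with the translates $u(x+te_n)$ rather than with reflections. Let $t^*$ be the infimum of those $t$ for which $u(x+se_n)\ge u(x)$ holds for all $s\ge t$. If $t^*>0$, take a sequence of near-touching points and translate horizontally so that the points stay at bounded height. Step 1 guarantees they cannot drift to $x_n\to-\infty$, and near-contact far up, where $u$ is close to $1$, is handled the same way. Passing to a limit produces a contact point for a limiting solution, and the strong maximum principle then gives a contradiction. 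The uniformity in Step 1 is exactly what allows the argument near $-\infty$, but uniform convergence to $+1$ as $x_n\to+\infty$ is not known a priori. That is the genuinely hard part. I would try to extract it from Step 2 together with an energy or barrier argument: since $u$ is positive somewhere, the sliding ball barrier propagates upward, which should force $\sup u=1$ in the limit.

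\textbf{Step 4: one-dimensionality.} Once $\partial_n u>0$ and the limits are $\pm1$ uniformly in $x'$, the Farina / Berestycki--Hamel--Monneau Gibbons-conjecture theorem gives $u=H(x_n-a)$. Then $a>0$ follows from the hypothesis on the zero set, and the sign normalization at the start gives the $\pm$ in the statement.
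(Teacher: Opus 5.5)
Your Steps 1 and 2 are sound: sliding the ball barrier inside $\{x_n\le0\}$ does give $u\to-1$ uniformly as $x_n\to-\infty$. The gap is in Steps 3 and 4. Both rely on $u(x',x_n)\to1$ uniformly in $x'$ as $x_n\to+\infty$, and you never prove this.

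This is not a side issue. Given Step 1, that uniform limit is essentially equivalent to the theorem, because the Berestycki--Hamel--Monneau Gibbons theorem then gives one-dimensionality without any monotonicity input. So your plan reduces the theorem to itself.

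Your proposed fix does not work. Knowing $u>0$ somewhere gives no family of large balls inside $\{u>0\}$ along which a barrier can be slid upward uniformly in $x'$. Even $\sup u=1$ is much weaker than uniform convergence; it is true, by the Hamel--Liu--Sicbaldi--Wang--Wei comparison theorem. Without control near $+1$ at the top, the sliding comparison of $u$ with $u(\cdot+te_n)$ cannot be initialized. Even after monotonicity, the pointwise limit $\lim_{x_n\to+\infty}u(x',x_n)$ is only a bounded stable solution in $\R^{n-1}$, and nothing you prove forces it to be $\equiv1$.

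The paper never needs the top limit. For monotonicity it keeps the reflection form of the moving plane. At a positive maximum of $D_\lambda[V]$, obtained after horizontal translation and compactness, the equation forces $a^2+ab+b^2\le1$ with $a=V(0,s)>b=V(0,2\lambda-s)$, hence $a>-1/\sqrt3$. So only control from below is needed to start, and your Step 1 would suffice there.

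The real input enters at the critical $\lambda_*$. There the limit profile $U$ is symmetric about $\{x_n=\lambda_*\}$, and the upper barrier $U\le H(x_n)$ from the comparison theorem gives $\sup U\le H(\lambda_*)<1$. This contradicts $\sup U=1$, which follows from the same theorem applied to the non-constant $U$.

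For one-dimensionality, the paper proceeds in four steps:
\begin{enumerate}
\item Monotonicity plus the limit $-1$ at the bottom gives $\mathcal E(u;B_R)\le CR^{n-1}$.
\item The Hutchinson--Tonegawa blow-down is a stationary integral cone supported in $\{x_n\ge0\}$, hence equal to $m|\{x_n=0\}|$.
\item Monotonicity gives $m=1$, by comparing the energy density with $\partial_{x_n}\Phi(v_j)$, whose integral along each vertical line is at most $\sigma$.
\item Wang's Allard-type rigidity theorem then yields $u=H(e\cdot x+c)$.
\end{enumerate}
An argument of this kind, replacing any use of the uniform limit at $+\infty$, is the missing idea in your proposal.
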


Our proof is closely related to the classical work of Berestycki, Caffarelli, and Nirenberg \cite{BCN}, who used the sliding method to establish monotonicity of positive solutions to semilinear elliptic equations in Lipschitz epigraphs, including equations of Allen--Cahn type. Inspired by their work, we establish monotonicity in the $x_n$ direction (see Theorem~\ref{thm:monotonicity}) using the moving plane method. More recently, Beuvin, Farina, and Sciunzi \cite{BFS} established monotonicity results in broad classes of continuous epigraphs bounded from below, using comparison principles in unbounded domains and a modified moving plane argument.

Our result is also related to Gibbons' conjecture, which asserts that a bounded entire solution of the Allen--Cahn equation satisfying
\[
u(x',x_n)\longrightarrow\pm1
\qquad\text{as }x_n\to\pm\infty,
\quad\text{uniformly in }x',
\]
must be one-dimensional in every dimension, without any monotonicity assumption. Berestycki, Hamel, and Monneau \cite{BHM} proved this assertion by the sliding method for a broader class of semilinear equations. A closely related geometric rigidity result was established by Farina \cite{Farina}, who proved that a non-constant entire solution of the Allen--Cahn equation is one-dimensional whenever its zero level set is contained in a slab. For related symmetry and rigidity results, see also \cite{BCN2, FV,BF}.

The comparison theorem in \cite{HLSWW} yields uniform convergence to $-1$ as $x_n\to-\infty$ and provides vertical control on limits of horizontal translates of the solution. These facts allow us to initiate the moving plane argument. Once monotonicity is established, we show that the solution has finite energy density at infinity. The varifold convergence theory of Hutchinson and Tonegawa \cite{HT}, combined with the classical rigidity theorem for stationary cones contained in a half-space, implies that the blow-down limit is a hyperplane, possibly with higher multiplicity. We then use monotonicity to rule out higher multiplicity and apply Wang's rigidity theorem \cite[Theorem~11.1]{Wang}.

Finally, rigidity for \eqref{eq:AC} is closely related to De Giorgi's
conjecture \cite{DeGiorgi}, which predicts that bounded solutions
strictly monotone in one direction are one-dimensional for $n\le8$.
The conjecture was proved in dimensions two and three by
Ghoussoub--Gui \cite{GG} and Ambrosio--Cabr\'e \cite{AC}, respectively,
and by Savin \cite{Savin} for $n\le8$ under the additional assumption
of pointwise limits $\pm1$ in the monotone direction.
Counterexamples in dimensions $n\ge9$ were constructed by
del Pino--Kowalczyk--Wei \cite{DPKW}. Recent developments include
one-dimensionality of stable solutions in $\R^4$ with cubic energy
growth \cite{FS}, extended to solutions of finite Morse index under
the same energy bound in \cite{Florit26}. The classification of bounded
stable solutions in $\R^3$ without energy-growth assumptions was
established independently in \cite{CFFFS,LLWWWW}, yielding
De Giorgi's conjecture in dimension four.

We also mention recent rigidity results for minimal hypersurfaces under
height-growth, half-space, or slab confinement assumptions, together
with suitable stability or topological conditions \cite{CM26,AM26}.
These developments leave many interesting questions open.

\textbf{AI disclosure.} The author used AI tools to help organize the author's original proofs
and improve the English exposition. The author has reviewed and verified
all mathematical arguments and takes full responsibility for the content
of the manuscript.

\section{Preliminaries}\label{sec:prelim}

Throughout this paper, we write $x=(x',x_n)\in\R^n$, where $x'\in\R^{n-1}$.

In this section, we recall two key theorems needed for our analysis.
We begin with the following comparison theorem of Hamel--Liu--Sicbaldi--Wang--Wei \cite{HLSWW}.

\begin{theorem}[Hamel--Liu--Sicbaldi--Wang--Wei
{\cite[Theorem~3]{HLSWW}}]\label{thm:comparison}
Let $n\ge2$, and let $u$ be a bounded, non-constant entire solution of
\eqref{eq:AC}. If $u<0$ in $\{x_n<0\}$, then there exists $a\in\R$ such that
\begin{equation}\label{eq:comparison}
u(x',x_n)\le H(x_n+a).
\end{equation}
Either equality holds identically, or a sequence of horizontal translates
of $u$ converges to $H(x_n+a)$ in $C^2_{\loc}(\R^n)$.
\end{theorem}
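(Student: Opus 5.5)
The plan is to prove the comparison by the sliding method in the vertical direction. We slide the heteroclinic $H(x_n+a)$ down from $a=+\infty$ and stop at the first contact with $u$. The first step is an asymptotic statement: for every $\delta>0$ there is $R=R(\delta)$ with $u\le -1+\delta$ in $\{x_n\le -R\}$. To prove it, set $v=-u$, which is a bounded positive solution of the same equation in $\{x_n<0\}$. Take a ball $B_\rho$ with $\rho$ large enough that its first Dirichlet eigenvalue satisfies $\lambda_1(B_\rho)<1$, with eigenfunction $\varphi$. Then $\varepsilon\varphi$ is a strict subsolution for small $\varepsilon$, since $\lambda_1\varepsilon\varphi\le \varepsilon\varphi-\varepsilon^3\varphi^3$. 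We slide $\varepsilon\varphi(\cdot-y)$ over all centers $y$ with $B_\rho(y)\subset\{x_n<0\}$, which is a connected family, starting from one center where $v\ge\varepsilon\varphi$ by compactness and positivity. The strong maximum principle then gives $v\ge\varepsilon\varphi(\cdot-y)$ for every such $y$, so $v\ge c>0$ in $\{x_n\le-\rho\}$. Comparing next with the radial solutions of the Dirichlet problem in larger balls, which tend to $1$ at the center as the radius grows, upgrades this to $v\ge1-\delta$ in $\{x_n\le -R(\delta)\}$.

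Now fix $\delta$ so small that $f(s)=s-s^3$ is strictly decreasing on $[-1,-1+\delta]$ and on $[1-\delta,1]$, which holds as long as $1-\delta>1/\sqrt3$. Let $w_a=u-H(x_n+a)$. We first show that $w_a\le0$ for all $a$ large. Choose $a$ so large that $H(x_n+a)\ge 1-\delta$ on $\{x_n\ge -R\}$. Suppose $\sup w_a>0$, and take a maximizing sequence $x^k$. Translating horizontally by $(x^k)'$ and passing to a $C^2_{\loc}$ limit (which is allowed since $u$ is bounded, hence has bounded $C^{2,\alpha}$ norms) gives a limit $\tilde u$ and a point $\bar x$ where $\tilde w=\tilde u-H$ attains a positive maximum; if $x^k_n\to-\infty$, translate vertically as well, the comparison function then tending to $-1$. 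Where $x_n\ge -R$, we have $\tilde u>H\ge1-\delta$. Where $x_n\le -R$, if $H\le-1+\delta$ there too, then $H<\tilde u\le-1+\delta$. Otherwise $H>-1+\delta$ and $\tilde u\le -1+\delta$, which is incompatible with $\tilde w>0$. In every case $\tilde u$ and $H$ lie in the same monotone region of $f$, so $f(\tilde u)<f(H)$ and hence $\Delta\tilde w=f(H)-f(\tilde u)>0$ at a maximum point, which is a contradiction. Hence $u\le H(x_n+a)$ for $a$ large.

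Set $a_*=\inf\{a: u\le H(x_n+a)\ \text{in }\R^n\}$. The infimum is finite: if $u\le H(x_n+a)$ held for every $a$, then letting $a\to-\infty$ gives $u\le-1$, and then $u\equiv-1$, because $-1$ is a stable equilibrium and a solution lying below it with $\Delta(u+1)=-f(u)\ge0$ and bounded must equal it. This contradicts non-constancy. At $a_*$ we have $u\le H(x_n+a_*)$ and $\sup(u-H(x_n+a_*+\eta))>0$ for every $\eta<0$ close to $0$. The maximum-principle argument of the previous paragraph, now applied in the two tails $\{x_n\le -R\}$ and $\{x_n+a_*\ge R'\}$, where both functions lie in a decreasing region of $f$, shows that these near-contact points have $x_n$ confined to a bounded interval.

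\textbf{Main obstacle: contact at horizontal infinity.} If the infimum $\inf(H(x_n+a_*)-u)=0$ is attained at a finite point, the strong maximum principle applied to the linear equation satisfied by $H-u\ge0$ gives $u\equiv H(x_n+a_*)$. If it is not attained, we take the near-contact points $x^k$, whose $x_n$-coordinates stay bounded, and translate horizontally by $(x^k)'$. The translates $u(\cdot+(x^k)',\cdot)$ converge in $C^2_{\loc}$ to a solution $\tilde u\le H(x_n+a_*)$ that touches it at a finite point. The strong maximum principle then forces $\tilde u\equiv H(x_n+a_*)$, which is the asserted alternative. The delicate point is that the $\eta$-perturbation argument genuinely produces touching in a bounded vertical window, and it is exactly here that the uniform tail estimates from the first step are essential.
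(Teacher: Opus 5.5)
The paper gives no proof of this statement: it quotes the result from \cite{HLSWW} and uses only its consequences. There is therefore nothing internal to compare against. Your vertical sliding argument is a correct, self-contained proof. The three main steps all go through:
\begin{itemize}
\item the lower-tail bound $u\le-1+\delta$ on $\{x_n\le-R\}$;
\item the exclusion of positive maxima of $u-H(x_n+a)$ in the two tails, where $f(s)=s-s^3$ is strictly decreasing, which also confines near-contact points to a fixed height window;
\item compactness plus the strong maximum principle at $a_*$.
\end{itemize}

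Three points need tightening.

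(i) Your reason for $a_*>-\infty$ has a sign error. If $u\le-1$ then $f(u)\ge0$, so $\Delta(u+1)=-f(u)\le0$. A bounded superharmonic function on $\R^n$ with $n\ge3$ need not be constant, so this does not close. Use instead the standard bound $|u|\le1$ for bounded entire solutions: a translated limit attaining $\inf u=m<-1$ would give $0\le\Delta\tilde u=-f(m)<0$ at the minimum point. Then $u\le-1$ forces $u\equiv-1$.

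(ii) The upgrade from $v\ge c$ to $v\ge1-\delta$ should be made explicit. Note that $t\,w_R$ is a subsolution for $t\in(0,1]$, because $t^3\le t$. At each center $y$ with $y_n<-R$, raise $t$ from near $0$ to $1$. The strong maximum principle rules out interior contact, since $v>0=t\,w_R(\cdot-y)$ on $\partial B_R(y)$. This gives $v\ge w_R(0)$ on $\{x_n<-R\}$, and $w_R(0)\to1$ as $R\to\infty$.

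(iii) The touching point at bounded height should be written out. For $a_k\uparrow a_*$, pick $x^k$ with $u(x^k)>H(x^k_n+a_k)$ and $x^k_n$ in a fixed compact window. This is possible because maximizing sequences of $u-H(x_n+a_k)$ cannot escape to either tail, uniformly for $a_k\in[a_*-1,a_*]$. Then $0\le H(x^k_n+a_*)-u(x^k)<H(x^k_n+a_*)-H(x^k_n+a_k)\le\|H'\|_\infty(a_*-a_k)\to0$. Hence the horizontal limit touches $H(x_n+a_*)$ at a finite point, and your strong maximum principle argument applies. In the step for $a\gg1$ you should also dispose of $x^k_n\to+\infty$, which is immediate since $u\le1$.

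With these routine additions the proof is complete.
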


In particular, this theorem implies that $\sup_{\R^n}u=1$.
We will also apply \eqref{eq:comparison} to non-constant limits of
horizontal translates to carry out the moving plane method.

Next, we recall the following varifold convergence theorem of Hutchinson--Tonegawa.

Let
\[
F(s):=\frac14(1-s^2)^2,
\qquad
\sigma:=\int_{-1}^{1}\sqrt{2F(s)}\,ds.
\]
For an open set $\Omega\subset\mathbb{R}^n$, we define the energy by
\[
\mathcal{E}(u,\Omega)
:=
\int_\Omega
\left(\frac12|\nabla u|^2+F(u)\right)\,dx.
\]
\begin{theorem}[Hutchinson--Tonegawa
{\cite[Theorem~1]{HT}}]
\label{thm:HT}
Let $\Omega\subset\R^n$ be a bounded Lipschitz domain, and let
$\varepsilon_j\downarrow0$ and $v_j\in C^3(\Omega)$ satisfy
\[
\varepsilon_j\Delta v_j=\varepsilon_j^{-1}F'(v_j),
\qquad |v_j|\le1,
\qquad
\sup_j\int_\Omega
\left(
\frac{\varepsilon_j}{2}|\nabla v_j|^2
+\frac{F(v_j)}{\varepsilon_j}
\right)\,dx<\infty.
\]
Define the associated varifolds by
\[
V_j(\zeta)=\frac1\sigma
\int_{\Omega\cap\{|\nabla v_j|>0\}}
\zeta(x,I-\nu_j\otimes\nu_j)
\varepsilon_j|\nabla v_j|^2\,dx,
\qquad
\nu_j=\frac{\nabla v_j}{|\nabla v_j|},
\]
for $\zeta\in C_c(\Omega\times G(n,n-1))$, where each plane is
identified with its orthogonal projection.
After passing to a subsequence, $V_j$ converges locally as varifolds
to a stationary integral $(n-1)$-varifold $V$, and
\begin{equation}\label{eq:energy-measure}
\left(
\frac{\varepsilon_j}{2}|\nabla v_j|^2
+\frac{F(v_j)}{\varepsilon_j}
\right)\,dx
\rightharpoonup\sigma\|V\|,
\end{equation}
\begin{equation}\label{eq:equipartition}
\frac{F(v_j)}{\varepsilon_j}
-\frac{\varepsilon_j}{2}|\nabla v_j|^2
\longrightarrow0
\quad\text{in }L^1_{\loc}(\Omega).
\end{equation}
\end{theorem}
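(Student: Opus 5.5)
We follow the strategy of Hutchinson--Tonegawa. Set $e_j:=\frac{\varepsilon_j}{2}|\nabla v_j|^2+\varepsilon_j^{-1}F(v_j)$ and $\xi_j:=\frac{\varepsilon_j}{2}|\nabla v_j|^2-\varepsilon_j^{-1}F(v_j)$ (the discrepancy). The mass bound gives, after a subsequence, $e_j\,dx\rightharpoonup\mu$ and $V_j\to V$ as varifolds. The argument then has four steps: a monotonicity formula, vanishing of the discrepancy, rectifiability and stationarity via Allard, and integrality via blow-ups. Throughout, the basic tool is the stress--energy tensor $T_j=e_j I-\varepsilon_j\nabla v_j\otimes\nabla v_j$, which is divergence free by the equation. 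Its first variation against a vector field $X$ equals the first variation of $V_j$ up to a term involving $\xi_j\,\mathrm{div}X$. Hence stationarity of $V$ and the identification $\sigma\|V\|=\mu$ both reduce to proving \eqref{eq:equipartition}.

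\textbf{Monotonicity formula with discrepancy.} Testing $\mathrm{div}\,T_j=0$ with $X=x-x_0$ gives
\[
\frac{d}{dr}\Big(r^{1-n}\int_{B_r(x_0)}e_j\Big)=r^{-n-1}\int_{\partial B_r}\varepsilon_j\big((x-x_0)\cdot\nabla v_j\big)^2+r^{-n}\int_{B_r}\xi_j .
\]
So $r^{1-n}\mu_j(B_r)$ is almost monotone, provided the positive part $\xi_j^+$ is controlled. We first prove the pointwise bound $\xi_j\le C\varepsilon_j^{-1}\cdot o(1)$, in the Modica form $\frac{\varepsilon}{2}|\nabla v|^2\le \varepsilon^{-1}F(v)+\text{small}$ away from $\partial\Omega$. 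On the $\varepsilon$-scale this becomes a maximum principle for $\frac12|\nabla w|^2-F(w)-G_\varepsilon(w)$, with a small correction $G_\varepsilon$ handling the boundary effects of $\Omega$. The bound gives $r^{-n}\int_{B_r}\xi_j^+\lesssim \varepsilon_j^{\beta}r^{-\beta'}$ on scales $r\ge\varepsilon_j^{1-\delta}$, which is integrable in $r$. This yields: (i) upper density bounds, (ii) a lower density bound $\mu(B_r(x))\ge c r^{n-1}$ for $x\in\mathrm{spt}\mu$ (using a clearing-out lemma: small energy at scale $r$ forces $|v_j|\approx1$), and (iii) existence of the density $\theta(x)=\lim r^{1-n}\mu(B_r(x))/\omega_{n-1}$.

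\textbf{Vanishing discrepancy.} This step is the main obstacle. We show $|\xi_j|\to0$ in $L^1_{\loc}$, which is exactly \eqref{eq:equipartition}. We already have $\xi_j^+\to0$, and the key is to rule out a negative limit $\xi_j\,dx\rightharpoonup\xi\le0$. We would argue by contradiction. If $|\xi|$ charged a set of positive $\mu$-measure, the monotonicity identity (which now has a favorable sign) would force $r^{1-n}\mu(B_r)$ to blow up like $\int r^{-1}dr$ at $\mu$-a.e.\ such point. This contradicts the upper density bound, so $|\xi|\ll\mu$ with zero density, giving $\xi=0$. With equipartition in hand, the first variations of $V_j$ converge to zero, so $V$ is stationary and $\|V\|=\mu/\sigma$.

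\textbf{Rectifiability and integrality.} Stationarity together with the positive lower density bound lets Allard's rectifiability theorem give that $V$ is rectifiable. For integrality we blow up at a $\mu$-generic point where $V$ has an approximate tangent plane $P$ with multiplicity $\theta$. Rescaling by $r_j\to0$ with $\varepsilon_j/r_j\to0$ suitably produces a sequence whose energy concentrates on $P$, with $\nabla v_j$ nearly normal to $P$ in $L^1$ (tilt-excess decay). By Fubini, on most lines normal to $P$, $v_j$ is close on the $\varepsilon_j$-scale to an ODE solution with equipartition. Each transition between $\pm1$ therefore contributes energy close to $\sigma$, and the transitions are well separated. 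Counting them shows $\theta\in\mathbb N$, so $V$ is integral.
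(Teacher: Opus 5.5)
The paper offers no proof of its own here (it imports \cite[Theorem~1]{HT}), and your outline follows the Hutchinson--Tonegawa architecture. The vanishing-discrepancy argument (a divergent $\int_0 r^{-1}\,dr$ set against the lower density bound, using $|\xi|\le\mu$) and the appeal to Allard are sound. The genuine gap is the final inference, ``counting them shows $\theta\in\mathbb N$''. On good lines your one-dimensional analysis gives line energies $E_j(x')=\int e_j(x',t)\,dt\approx\sigma N_j(x')$ with $N_j(x')\in\mathbb N$. Fubini then gives $\int_D E_j\,dx'\to\theta\sigma|D|$ for a disc $D\subset P$, which exhibits $\theta$ only as a limit of averages of integers. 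Everything you have at that point (varifold convergence to $\theta|P|$, $L^1$-small discrepancy and tilt, heteroclinic profiles near the transitions on most lines) is equally consistent with a different picture. Layers could end in thin folds arranged on scales much larger than $\varepsilon_j$, so that $N_j$ takes different values on sets of lines of positive measure and $\theta$ is fractional. Excluding this needs an argument across lines that uses the equation again. For instance, apply the monotonicity formula centred at points of the layers, where the density ratio stays $\ge 1-o(1)$ on all scales from $L\varepsilon_j$ up to the blow-up scale. Play this against the upper bound $\theta+o(1)$ valid at every centre, to show that layers cannot terminate inside the blow-up cylinder and hence $N_j(x')\le\theta+o(1)$ on most lines. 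Only such a bound, together with uniform integrability of $E_j$, lets $\int_D E_j\to\theta\sigma|D|$ force $N_j=\theta$ on most lines. This is where the real work of the integrality step lies.

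Two smaller corrections. Testing $\operatorname{div}T_j=0$ with $x-x_0$ produces the discrepancy term with a minus sign, $-r^{-n}\int_{B_r}\xi_j$. That is the sign your later reasoning actually uses (control of $\xi_j^+$, favourable sign once the limit is $\le 0$), so only the display is wrong. Also, the pointwise bound $\xi_j\le\varepsilon_j^{-1}o(1)$ does not by itself give your integral bound on $\xi_j^+$: integrated over $B_r$ it only gives $O(r^n\varepsilon_j^{-1})\,o(1)$. You also need that $\xi_j^+$ is essentially confined to $\{|v_j|\le 1-\delta\}$, whose volume in $B_r$ is at most $C_\delta\varepsilon_j\mu_j(B_r)$, with exponential decay of $|\nabla v_j|$ away from that set. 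This then has to be fed into a Gronwall-type argument together with the density bound.
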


\section{Monotonicity of the solution}\label{sec:mono}

In this section, we establish the monotonicity of the solution, which
is the main technical step in the proof of our main theorem.

\begin{theorem}\label{thm:monotonicity}
Let $n\ge2$, and let $u\in C^2(\R^n)$ be a bounded, non-constant solution of
\[
-\Delta u=u-u^3\qquad\text{in }\R^n.
\]
If $u<0$ in $\{x_n<0\}$, then $\partial_{x_n}u>0$ in $\R^n$.
\end{theorem}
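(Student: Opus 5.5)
We prove the statement with the moving plane method in the $x_n$ direction, using reflections across the hyperplanes $\{x_n=\lambda\}$. For $\lambda\in\R$, set
\[
\Sigma_\lambda=\{x_n<\lambda\},\qquad u_\lambda(x',x_n)=u(x',2\lambda-x_n),\qquad w_\lambda=u_\lambda-u .
\]
We aim to show $w_\lambda\ge0$ in $\Sigma_\lambda$ for every $\lambda\in\R$. Then $\partial_{x_n}u\ge0$, and the strong maximum principle applied to the linearized equation upgrades this to $\partial_{x_n}u>0$ (since $u$ is non-constant).

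The function $w_\lambda$ satisfies
\[
-\Delta w_\lambda=c_\lambda w_\lambda,\qquad c_\lambda=1-(u_\lambda^2+u_\lambda u+u^2).
\]
Where $u$ and $u_\lambda$ are both close to $\pm1$ we have $c_\lambda\le -1$, so the maximum principle holds on such unbounded regions.

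\textbf{Step 1: uniform convergence at $-\infty$.} By Theorem~\ref{thm:comparison}, $u\le H(x_n+a)$, and hence $u\to-1$ as $x_n\to-\infty$, uniformly in $x'$. Fix $\delta>0$ small and choose $M$ with $u<-1+\delta$ in $\{x_n<-M\}$. The same bound applies to every limit of horizontal translates of $u$.

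\textbf{Step 2: starting the procedure for $\lambda\le -M$.} In $\Sigma_\lambda$ we have $u<-1+\delta$. Since $u_\lambda\ge -1$ (note $|u|\le1$ because $u$ is bounded), at a negative minimum of $w_\lambda$ we get $u>u_\lambda\ge -1$. There $u$ and $u_\lambda$ are both near $-1$, so $c_\lambda<0$.

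Negative infima may escape to infinity horizontally. To handle this, we take a minimizing sequence, translate horizontally, and pass to a limit $\bar u$ in $C^2_{\loc}$. Since $w_\lambda\to0$ on $\partial\Sigma_\lambda$ and $w_\lambda\to0$ as $x_n\to-\infty$ (both $u,u_\lambda\to-1$), the limit function $\bar w_\lambda$ attains a negative interior minimum. At that point $c<0$ and $\Delta\bar w_\lambda\ge0$, which gives a contradiction. Hence $w_\lambda\ge0$ for all $\lambda\le -M$.

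\textbf{Step 3: continuation.} Let
\[
\lambda^*=\sup\{\lambda:\ w_\mu\ge0\text{ in }\Sigma_\mu\text{ for all }\mu\le\lambda\}.
\]
Suppose $\lambda^*<\infty$. We then derive a contradiction by showing that $w_{\lambda^*+\epsilon}\ge0$ for all small $\epsilon>0$.

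By the strong maximum principle, either $w_{\lambda^*}>0$ in $\Sigma_{\lambda^*}$, or $w_{\lambda^*}\equiv0$. The second alternative would make $u$ symmetric about $\{x_n=\lambda^*\}$. This is impossible: $u\to-1$ as $x_n\to-\infty$, while $\sup u=1$ by Theorem~\ref{thm:comparison}, and more precisely the comparison with $H$ yields $\limsup u\ge$ values near $1$ at large $x_n$ along suitable translates. So $w_{\lambda^*}>0$.

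If $w_{\lambda_k}$ has negative values for some $\lambda_k\downarrow\lambda^*$, its negative minima lie in the strip $\{-M\le x_n\le\lambda_k\}$, since Step 2's argument excludes the region $x_n<-M$. We translate these minimum points horizontally and extract a limit $\bar u$ of horizontal translates, which is again a bounded solution. The limit satisfies $\bar w_{\lambda^*}\ge0$ and has a zero at some point $p$ with $\bar w_{\lambda^*}(p)=0$ and $\nabla\bar w_{\lambda^*}(p)=0$, either in the interior or on the boundary (Hopf lemma).

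\textbf{Main obstacle.} The difficulty is excluding $\bar w_{\lambda^*}\equiv0$ for the limit $\bar u$. Positivity of $w_{\lambda^*}$ need not survive the passage to the limit, so we must argue directly about $\bar u$.

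First, $\bar u$ is non-constant: it tends to $-1$ as $x_n\to-\infty$, yet has a zero at height in $[-M,\lambda^*]$. Indeed, near the minimum point $u$ cannot be close to $-1$, otherwise $c<0$ would contradict the minimum. Also $\bar u\le0$ in $\{x_n<0\}$, and strictly $\bar u<0$ there by the strong maximum principle, as $\bar u\not\equiv0$.

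So Theorem~\ref{thm:comparison} applies to $\bar u$, giving $\bar u\le H(x_n+\bar a)$ and also a further translate-limit equal to $H(x_n+\bar a)$. If $\bar u$ were symmetric about $\{x_n=\lambda^*\}$, then $\bar u\to-1$ as $x_n\to+\infty$ as well. This contradicts the existence of horizontal translates converging to $H(x_n+\bar a)$, which tends to $+1$, because reflection symmetry passes to these translate-limits.

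Hence $\bar w_{\lambda^*}>0$ by the strong maximum principle, contradicting the zero at $p$. Therefore $\lambda^*=\infty$, which gives $\partial_{x_n}u\ge0$, and then $\partial_{x_n}u>0$ as explained above.
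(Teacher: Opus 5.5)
Your proposal is correct and follows essentially the same route as the paper. You run moving planes in $x_n$, starting far down where $w_\lambda<0$ forces both $u$ and $u_\lambda$ near $-1$ and hence a negative zeroth-order coefficient; the paper uses the explicit threshold $\lambda_0=H^{-1}(-1/\sqrt3)$ in place of your $\delta$ and $M$. You then pass to limits of horizontal translates at $\lambda^*$, get non-constancy of $\bar u$ from its value at the extremal point, and apply the comparison theorem to $\bar u$ to exclude reflection symmetry, exactly as the paper does.

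Two small slips need fixing. First, $w_\lambda$ does not tend to $0$ as $x_n\to-\infty$, because the reflection samples $u$ at large heights; what you need, and what holds, is $w_\lambda\ge-1-u\to0$. Second, you only know $\bar u(p)\ge-1+\delta$, not that $\bar u$ has a zero at height in $[-M,\lambda^*]$; the bound $\bar u(p)\ge-1+\delta$ already suffices for non-constancy.
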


\begin{proof}

By \eqref{eq:comparison}, after a vertical translation, we may assume that
\begin{equation}\label{eq:normalization}
-1<u(x',x_n)\le H(x_n)\qquad\text{in }\R^n.
\end{equation}
Interior elliptic estimates, or alternatively Modica  inequality \cite{Modica}, give
a constant $L$ such that $|\nabla u|\le L$.

For $\lambda\in\R$ and a function $f$ defined on $\R^n$, set
\[
D_\lambda[f](x',x_n)
=f(x',x_n)-f(x',2\lambda-x_n),
\qquad x_n<\lambda,
\]
and
\[
M_\lambda
=\sup_{\substack{x'\in\R^{n-1}\\x_n<\lambda}}
D_\lambda[u](x',x_n).
\]
Let $\lambda_0=H^{-1}(-1/\sqrt3)$.

\medskip

\noindent\textbf{Claim 1.}
We have $M_\lambda\le0$ for every $\lambda\le\lambda_0$.

\begin{proof}
Suppose that $M_\lambda>0$. Choose a sequence
$y_j=(y'_j,y_{j,n})\in\R^n$, with $y_{j,n}<\lambda$, such that
\[
D_\lambda[u](y'_j,y_{j,n})\longrightarrow M_\lambda.
\]
The bounds
\[
D_\lambda[u](x',x_n)
\le u(x',x_n)+1
\le H(x_n)+1
\]
and
\[
|D_\lambda[u](x',x_n)|
\le2L(\lambda-x_n)
\]
show that, for sufficiently large $j$, the heights $y_{j,n}$ are
bounded below and stay a positive distance from $\lambda$.

By interior elliptic estimates, after passing to a subsequence, we have
\[
y_{j,n}\rightarrow s<\lambda,
\qquad
u(x'+y'_j,x_n)\rightarrow V(x',x_n)
\quad\text{in }C^2_{\loc}(\R^n).
\]
The limit $V$ solves the Allen--Cahn equation and satisfies
\begin{equation}\label{eq:limit-bounds}
-1\le V(x',x_n)\le H(x_n),
\qquad |\nabla V|\le L.
\end{equation}
Moreover,
\begin{equation}\label{eq:attained}
D_\lambda[V](0,s)=M_\lambda,
\qquad
D_\lambda[V]\le M_\lambda
\quad\text{in }\{x_n<\lambda\}.
\end{equation}

Subtracting the equations satisfied by $V$ and its reflection gives
\[
-\Delta D_\lambda[V]
=
\left(
1-V(x',x_n)^2
-V(x',x_n)V(x',2\lambda-x_n)
-V(x',2\lambda-x_n)^2
\right)D_\lambda[V].
\]
Since $D_\lambda[V]$ attains its positive maximum at $(0,s)$, we have
$\Delta D_\lambda[V](0,s)\le0$, and hence
\begin{equation*}
V(0,s)^2
+V(0,s)V(0,2\lambda-s)
+V(0,2\lambda-s)^2
\le1.
\end{equation*}
This implies that $
V(0,s)>-\frac{1}{\sqrt3}$.
By \eqref{eq:limit-bounds} and the strict monotonicity of $H$, it follows that
\[
\lambda_0<s<\lambda.
\]
Thus $M_\lambda>0$ is impossible when $\lambda\le\lambda_0$.
Consequently,
\[
D_\lambda[u]\le0
\qquad\text{in }\{x_n<\lambda\}
\quad\text{for every }\lambda\le\lambda_0.
\]
\end{proof}

Now define $
\lambda_*=
\sup\left\{\lambda |
M_\mu \le0 
\text{ for every }\mu\le\lambda\right\}.$
We have shown above that $\lambda_*\ge \lambda_0$.
\medskip

\noindent \textbf{Claim 2:} We have $\lambda_* = \infty$.
\begin{proof}
Suppose, for contradiction, that $\lambda_*<\infty$.
By continuity,
\begin{equation}\label{eq:critical-order}
D_{\lambda_*}[u]\le0.
\end{equation}
By the definition of $\lambda_*$, there exists a sequence $\lambda_k>\lambda_*$
with $\lambda_k\to\lambda_*$ such that
\[
M_k:=M_{\lambda_k}>0.
\]

For $x_n<\lambda_*$, we have
\[
\begin{aligned}
D_{\lambda_k}[u](x',x_n)
&=D_{\lambda_*}[u](x',x_n)
 +u(x',2\lambda_*-x_n)-u(x',2\lambda_k-x_n)\\
&\le2L(\lambda_k-\lambda_*).
\end{aligned}
\]
For $\lambda_*\le x_n<\lambda_k$, the same upper bound follows from Modica's inequality. Therefore,
\begin{equation}\label{eq:small-maxima}
0<M_k\le2L(\lambda_k-\lambda_*)\longrightarrow0.
\end{equation}

Applying the same argument as in the proof of Claim 1, we obtain $U_k$ and $x_{k,n}$ such that
\begin{gather}
D_{\lambda_k}[U_k](0,x_{k,n})=M_k,
\qquad \lambda_0<x_{k,n}<\lambda_k,\label{eq:chosen-maxima}\\
U_k(0,x_{k,n})>-\frac1{\sqrt3}.\label{eq:chosen-values}
\end{gather}
By compactness, after passing to a subsequence, we obtain a limit $U$ such that
\[
U_k\to U\quad\text{in }C^2_{\loc}(\R^n),
\qquad
x_{k,n}\to x_{*,n}\in[\lambda_0,\lambda_*].
\]
The limit satisfies
\begin{equation}\label{eq:limit-properties}
U(x)\le H(x_n),\qquad U(0,x_{*,n})\ge-\frac1{\sqrt3}.
\end{equation}
In particular, $U$ is non-constant. It also follows from \eqref{eq:small-maxima} that
\begin{equation}\label{eq:limit-order}
D_{\lambda_*}[U]\le0\qquad\text{in }\{x_n<\lambda_*\}.
\end{equation}

We now prove that $D_{\lambda_*}[U]\equiv 0$.
If $x_{*,n}<\lambda_*$, equations \eqref{eq:small-maxima}
and \eqref{eq:chosen-maxima} imply
\[
D_{\lambda_*}[U](0,x_{*,n})=0.
\]
The strong comparison principle, applied to $U$ and its reflection,
gives
\begin{equation}\label{eq:symmetry}
U(x',x_n)=U(x',2\lambda_*-x_n).
\end{equation}

If $x_{*,n}=\lambda_*$, the interior maximum point in
\eqref{eq:chosen-maxima} satisfies
\[
0=\partial_{x_n}D_{\lambda_k}[U_k](0,x_{k,n})
=(\partial_{x_n} U_k)(0,x_{k,n})+(\partial_{x_n} U_k)(0,2\lambda_k-x_{k,n}).
\]
Passing to the limit gives
\begin{equation}\label{eq:zero-derivative}
\partial_{x_n}U(0,\lambda_*)=0.
\end{equation}
Define
\[
w(x',x_n)=U(x',2\lambda_*-x_n)-U(x',x_n),\qquad x_n<\lambda_*.
\]
By \eqref{eq:limit-order}, $w\ge0$. The strong minimum principle
and the Hopf lemma imply that, if $w\not\equiv0$ in the half-space, then
\[
\partial_{x_n}w(0,\lambda_*)<0.
\]
However, \eqref{eq:zero-derivative} implies
\[
\partial_{x_n}w(0,\lambda_*)=-2\partial_{x_n}U(0,\lambda_*)=0.
\]
This contradiction proves \eqref{eq:symmetry} in the boundary case as well.

Finally, \eqref{eq:limit-properties} and \eqref{eq:symmetry} give
\[
U(x',x_n)\le\min\{H(x_n),H(2\lambda_*-x_n)\}\le H(\lambda_*).
\]
Since $\lambda_*$ is finite,
\begin{equation}\label{eq:sup-less-one}
\sup_{\R^n}U\le H(\lambda_*)<1.
\end{equation}
On the other hand, $U$ is non-constant and satisfies
\[
U(x',x_n)\le H(x_n)<0\qquad(x_n<0).
\]
Applying Theorem~\ref{thm:comparison} to $U$ gives
$\sup_{\R^n}U=1$, contradicting
\eqref{eq:sup-less-one}. Hence $\lambda_*=\infty$.

\end{proof}

The strict inequality $\partial_{x_n} u >0$ now follows from the
strong maximum principle applied to the equation satisfied by
$\partial_{x_n} u$. Indeed, if $u_{x_n}\equiv0$, then
\eqref{eq:normalization} would imply
$
u(x)\le\inf_{x_n\in\R}H(x_n)=-1,
$
contradicting $u>-1$.
\end{proof}

\section{Proof of the main theorem}\label{sec:main}

The following lemma shows that monotonicity in the $x_n$ direction,
together with one-sided uniform convergence, implies finite energy
density at infinity. The result follows from \cite[Theorem~1.3]{AC},
but we include the proof for completeness.

\begin{lemma}\label{lem:energy}
Suppose that $u$ is a bounded entire Allen--Cahn solution with $u_{x_n}>0$
and
\[
u(x',x_n)\rightarrow-1\quad\text{uniformly in }x'
\quad\text{as }x_n\to-\infty.
\]
Then, there exists a constant $C>0$ independent of $R\ge1$ such that
\[
\mathcal E(u;B_R)\le CR^{n-1}.
\]
\end{lemma}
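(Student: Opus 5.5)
We follow the sliding/energy-comparison argument of Ambrosio--Cabr\'e. The idea is to compare $u$ on $B_R$ with its vertical translates $u^t(x):=u(x',x_n-t)$ for $t\ge0$. Since $u_{x_n}>0$, the family $u^t$ decreases in $t$. By the uniform convergence hypothesis, $u^t\to-1$ locally uniformly as $t\to\infty$; in fact the convergence is uniform on $B_R$. The energy of the constant $-1$ is zero, so
\[
\mathcal E(u;B_R)=\mathcal E(u^t;B_R)\big|_{t=0}
=\lim_{T\to\infty}\mathcal E(u^T;B_R)-\int_0^\infty\frac{d}{dt}\mathcal E(u^t;B_R)\,dt .
\]
Before using this, we must check that $\mathcal E(u^T;B_R)\to0$. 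Pointwise $u^T\to-1$. Interior elliptic estimates applied to $u+1$, which is small and solves a linear equation with bounded coefficients, give $\nabla u^T\to0$ uniformly on $B_R$. Hence $\mathcal E(u^T;B_R)\to0$ for each fixed $R$.

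Next we differentiate in $t$. With $\partial_t u^t=-u_{x_n}(x',x_n-t)$, integration by parts gives
\[
\frac{d}{dt}\mathcal E(u^t;B_R)=\int_{B_R}\big(-\Delta u^t+F'(u^t)\big)\partial_tu^t\,dx+\int_{\partial B_R}\partial_\nu u^t\,\partial_t u^t\,d\mathcal H^{n-1}.
\]
The bulk term vanishes because $u^t$ solves \eqref{eq:AC}, and $F'(s)=s^3-s$. Using $|\nabla u|\le L$ and $\partial_tu^t=-u_{x_n}(\cdot,\cdot-t)<0$, we obtain
\[
-\frac{d}{dt}\mathcal E(u^t;B_R)\le L\int_{\partial B_R}u_{x_n}(x',x_n-t)\,d\mathcal H^{n-1}.
\]

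Integrating in $t$ from $0$ to $\infty$ and using Fubini, the integrand telescopes:
\[
\int_0^\infty u_{x_n}(x',x_n-t)\,dt=u(x)-\lim_{t\to\infty}u(x',x_n-t)=u(x)+1\le 2 .
\]
Therefore $\mathcal E(u;B_R)\le 2L\,\mathcal H^{n-1}(\partial B_R)=CR^{n-1}$, where $C=2L\,|\partial B_1|$.

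The main technical point is justifying the limit and the exchange of integrals. This is why the one-sided uniform convergence and the sign $u_{x_n}>0$ are used. The sign makes the integrand nonnegative, so Tonelli applies. The convergence gives $\mathcal E(u^T;B_R)\to0$ via gradient decay, which we obtain from $C^{1}$ estimates for $u+1$ on unit balls near $x_n=-\infty$.
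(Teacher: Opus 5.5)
Your proposal is correct and is essentially the paper's proof. The paper slides the ball $B_R+te_n$ downward as $t\to-\infty$ rather than translating $u$, but it uses the same ingredients: the boundary-flux formula for the derivative of the energy, the bound from $u_{x_n}>0$ and $|\nabla u|\le L$, the telescoping to $L\int_{\partial B_R}(u+1)\,dS$, and $C^1$ convergence to $-1$ to make the energy at the far end vanish.
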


\begin{proof}
Let $L:=\|\nabla u\|_{L^\infty(\R^n)}<\infty$.
Using the equation, integration by parts, and $u_{x_n}>0$, we obtain
\[
\frac{d}{dt}\mathcal E(u;B_R+te_n)
=\int_{\partial(B_R+te_n)}u_{x_n}\partial_\nu u\,dS
\le L\frac{d}{dt}\int_{\partial B_R}u(x',x_n+t)\,dS.
\]
The uniform convergence as $x_n \rightarrow -\infty$ implies that $u|_{B_R + te_n}$ converges to $-1$ as $t \rightarrow -\infty$ in $C^1$-topology. This implies that
$\mathcal E(u;B_R+te_n)\rightarrow 0$ as $t\rightarrow-\infty$.
Integrating the inequality from $-\infty$ to $0$ therefore gives
\[
\mathcal E(u;B_R)
\le L\int_{\partial B_R}(u+1)\,dS
\le CR^{n-1}.
\]
\end{proof}

\begin{proof}[Proof of Theorem~\ref{thm:all}]
For $n=2$, the conclusion follows from \cite{HLSWW}.
We therefore assume $n\ge3$.

The lower half-space is connected and contains no zero of $u$.
Changing the sign of $u$, if necessary, we may assume that $u<0$ there.
The comparison theorem, followed by a vertical translation, gives
\[
-1<u(x',x_n)\le H(x_n).
\]
Theorem~\ref{thm:monotonicity} yields $u_{x_n}>0$.
Moreover, the comparison implies that $u(x',x_n)\to-1$ uniformly
in $x'$ as $x_n\to-\infty$. Lemma~\ref{lem:energy} therefore gives
$
\mathcal E(u;B_R)\le CR^{n-1}$ for $R\ge1$. Set
$
\Theta(R)=R^{1-n}\mathcal E(u;B_R).
$ As $\Theta$ is non-decreasing  the
energy bound implies
\[
0<\Theta(1)\le\Theta_\infty
:=\lim_{R\to\infty}\Theta(R)<\infty.
\]

Choose $R_j\uparrow\infty$ and set
\[
\varepsilon_j=R_j^{-1},\qquad v_j(x)=u(R_jx).
\]
Then $\varepsilon_j\Delta v_j=\varepsilon_j^{-1}F'(v_j)$,
$-1<v_j<1$, and, for every fixed $r>0$,
\[
\int_{B_r}
\left(
\frac{\varepsilon_j}{2}|\nabla v_j|^2
+\frac{F(v_j)}{\varepsilon_j}
\right)\,dx
=R_j^{1-n}\mathcal{E}(u,B_{R_jr})
=r^{n-1}\Theta(R_jr).
\]
Applying Theorem~\ref{thm:HT} on balls and taking a diagonal
subsequence gives a stationary integral varifold $V$ on $\R^n$
satisfying \eqref{eq:energy-measure} and \eqref{eq:equipartition} locally.
Hence we have
\[
\sigma\|V\|(B_r)=\Theta_\infty r^{n-1} \quad \text{ and } \quad \|V\|(\partial B_r)=0 \qquad \text{for every $r>0$}.
\]
and that $V$ is a cone centered at the origin.
Since $\Theta_\infty>0$, this cone is nonzero.
See also \cite[Proposition~11.2]{Wang}.

Let $K\Subset\{x_n<0\}$ and choose $\eta>0$ such that
$x_n\le-\eta$ on $K$.
The comparison theorme \ref{thm:comparison}  gives
\[
0<1+v_j(x)\le Ce^{-\sqrt2\eta R_j}\qquad(x\in K).
\]
Now the Modica inequality yields
\[
0\le
\frac{\varepsilon_j}{2}|\nabla v_j(x)|^2
+\frac{F(v_j(x))}{\varepsilon_j}
\le \frac{2F(v_j(x))}{\varepsilon_j}
\le CR_j e^{-2\sqrt{2}\eta R_j}
\longrightarrow 0
\]
uniformly on $K$. Consequently, $\operatorname{spt}\|V\|\subset\{x_n\ge0\}$.
Since $V$ is a stationary integral cone,
\cite[Theorem~36.5 and Remark~36.6]{Simon} and the constancy theorem \cite[Theorem~41.1]{Simon} implies that
\[
V=m|P|,\qquad m\in\mathbb N,\quad m\ge1,
\]

\medskip
\noindent\textbf{Claim:} We have $m=1$.
\begin{proof}
Define
\[
\Phi(s)=\int_{-1}^s\sqrt{2F(a)}\,da.
\]
Since $(v_j)_{x_n}>0$, the Modica inequality gives
\[
0\le\varepsilon_j(v_j)_{x_n}\le\sqrt{2F(v_j)},
\qquad
\frac{\varepsilon_j}{2}|\nabla v_j|^2
\le\frac{F(v_j)}{\varepsilon_j}.
\]
It follows that
\begin{equation}\label{eq:flux}
0\le
\frac{\varepsilon_j}{2}|\nabla v_j|^2
+\frac{F(v_j)}{\varepsilon_j}
-\partial_{x_n}\Phi(v_j)
\le
\frac{F(v_j)}{\varepsilon_j}
-\frac{\varepsilon_j}{2}|\nabla v_j|^2
+\varepsilon_j|\nabla_{x'}v_j|^2.
\end{equation}
Indeed, the lower bound follows from Young's inequality, while
the upper bound follows from
\[
\begin{aligned}
&\frac{F(v_j)}{\varepsilon_j}
-\frac{\varepsilon_j}{2}|\nabla v_j|^2
+\varepsilon_j|\nabla_{x'}v_j|^2
-\left(
\frac{\varepsilon_j}{2}|\nabla v_j|^2
+\frac{F(v_j)}{\varepsilon_j}
-\partial_{x_n}\Phi(v_j)
\right)\\
&\qquad
=(v_j)_{x_n}
\bigl(\sqrt{2F(v_j)}-\varepsilon_j(v_j)_{x_n}\bigr)
\ge0.
\end{aligned}
\]
We next show that the tangential gradient energy converges to 0 in $L^1_{\loc}(\R^n)$-topology.
Recall that an element $S\in G(n,n-1)$ is identified with the
orthogonal projection onto the corresponding $(n-1)$-dimensional
plane. On $\{|\nabla v_j|>0\}$, set
\[
S_j(x)=I-\nu_j(x)\otimes\nu_j(x)
\]
where $\nu_j = \frac{\nabla v_j}{|\nabla v_j|}$. 

Hence, for every $\zeta\in C_c(\R^n)$, we have
\[
\begin{aligned}
\sigma\int_{\R^n\times G(n,n-1)}
\zeta(x)|Se_n|^2\,dV_j(x,S)
&=
\int_{\{|\nabla v_j|>0\}}
\zeta(x)|S_j(x)e_n|^2
\varepsilon_j|\nabla v_j|^2\,dx\\
&=
\int_{\R^n}
\zeta(x)\varepsilon_j|\nabla_{x'}v_j|^2\,dx.
\end{aligned}
\]

Let $\pi_P$ denote the orthogonal projection onto
$P=\{x_n=0\}$. The varifold convergence
$V_j\to m|P|$ yields
\[
\begin{aligned}
\int_{\R^n}
\zeta\,\varepsilon_j|\nabla_{x'}v_j|^2\,dx
&\longrightarrow
\sigma m\int_P\zeta(x)|\pi_Pe_n|^2\,d\mathcal H^{n-1}(x) = 0.
\end{aligned}
\]
Choosing $\zeta\ge1$ on any prescribed compact set gives
\[
\varepsilon_j|\nabla_{x'}v_j|^2
\longrightarrow0
\quad\text{in }L^1_{\loc}(\R^n).
\]

Together with \eqref{eq:equipartition} and the nonnegative
upper and lower bounds in \eqref{eq:flux}, this implies
\[
\frac{\varepsilon_j}{2}|\nabla v_j|^2
+\frac{F(v_j)}{\varepsilon_j}
-\partial_{x_n}\Phi(v_j)
\longrightarrow0
\quad\text{in }L^1_{\loc}(\R^n).
\]

Let $0\le\varphi\in C_c^1(\R^{n-1})$ satisfy
$\int_{\R^{n-1}}\varphi\,dx'>0$, and let
$\psi\in C_c^1(\R)$ satisfy $0\le\psi\le1$ and $\psi(0)=1$.
For each fixed $x'$, we have
\[
\partial_{x_n}\Phi(v_j)
=\sqrt{2F(v_j)}(v_j)_{x_n}\ge0.
\]
Moreover, $|v_j|\le1$ implies
$0\le\Phi(v_j)\le\Phi(1)=\sigma$.
Thus,
\[
\int_\R\partial_{x_n}\Phi(v_j(x',x_n))\,dx_n
=
\lim_{R\to\infty}
\left[
\Phi(v_j(x',R))-\Phi(v_j(x',-R))
\right]
\le\sigma.
\]
Since $0\le\psi\le1$, it follows that
\[
\int_\R
\psi(x_n)\partial_{x_n}\Phi(v_j(x',x_n))\,dx_n
\le\sigma.
\]
Multiplying by $\varphi(x')$ and integrating in $x'$, we obtain
\[
\int_{\R^n}
\varphi(x')\psi(x_n)\partial_{x_n}\Phi(v_j)\,dx
\le\sigma\int_{\R^{n-1}}\varphi\,dx'.
\]

On the other hand, \eqref{eq:energy-measure} and $V=m|P|$ give
\[
\begin{aligned}
&\lim_{j\to\infty}
\int_{\R^n}\varphi(x')\psi(x_n)
\left(
\frac{\varepsilon_j}{2}|\nabla v_j|^2
+\frac{F(v_j)}{\varepsilon_j}
\right)\,dx\\
&\qquad
=\sigma m\int_P\varphi(x')\psi(0)\,d\mathcal H^{n-1}
=\sigma m\int_{\R^{n-1}}\varphi\,dx'.
\end{aligned}
\]
The preceding $L^1_{\loc}$ convergence allows us to replace
the energy density in this limit by $\partial_{x_n}\Phi(v_j)$.
Consequently,
\[
\sigma m\int_{\R^{n-1}}\varphi\,dx'
=
\lim_{j\to\infty}
\int_{\R^n}
\varphi(x')\psi(x_n)\partial_{x_n}\Phi(v_j)\,dx
\le\sigma\int_{\R^{n-1}}\varphi\,dx'.
\]
Since $\int\varphi>0$, we conclude that $m\le1$.
This proves that $m=1$.
\end{proof}

Now we have $\Theta_\infty=\sigma\omega_{n-1}$
where $\omega_{n-1}$ is the volume of the unit ball in $\R^{n-1}$.
Wang's rigidity theorem
\cite[Theorem~11.1]{Wang} therefore yields
\[
u(x)=H(e\cdot x+c)
\]
for a unit vector $e=(e',e_n)$ and a constant $c$.
Now it is straightforward to check that
the original solution $u$ is either of $\pm H(x_n-a)$ for some $a>0$.
\end{proof}

\bibliographystyle{abbrv}
\bibliography{half_space_references}

\end{document}